\documentclass[12pt,a4paper]{article}
\usepackage{authblk}
\usepackage{amsmath, amssymb, amsfonts, amsthm, array}
\usepackage{graphicx, caption, subcaption, float, enumerate, diagbox,color}

\newcolumntype{P}[1]{>{\centering\arraybackslash}p{#1}}

\newtheorem{theorem}{Theorem}[section]
\newtheorem{lemma}[theorem]{Lemma}
\newtheorem{corollary}[theorem]{Corollary}

\theoremstyle{definition}
\newtheorem{definition}[theorem]{Definition}

\title{A Note on the Laplacian Eigenvectors of Threshold Graphs}

\author{Irene Sciriha\thanks{irene.sciriha-aquilina@um.edu.mt\ ORCID 0000-0002-5477-6803}}
\author{Zoia Sherman\thanks{sherman.zoya@sfa.org.ua\ ORCID 0000-0002-9050-2068}}

\author{James L. Borg\thanks{james.borg@um.edu.mt\ ORCID 0000-0002-4635-7692}}

\affil{Department of Mathematics, Faculty of Science, University of Malta, Msida MSD 2080, Malta}

\date{\today}

\begin{document}
\maketitle


\begin{center}{\textbf{Abstract}}\end{center}

Threshold graphs are graphs that can be characterized in a number of different ways. For example, they are graphs that are $P_4,\ C_4,\  2K_2$--free. They may also be characterized by a finite sequence of positive integers $a_1, \ldots, a_r$, such that $a_1\geqslant 2$ and $a_1 + a_2 + \cdots + a_r = |V(G)|$.

Threshold graphs have the remarkable property that all graphs of the same order share a common integer Laplacian eigenbasis. This property characterizes threshold graphs. This result was proved in \cite{MachareteDelVecchio}. We give a different proof of the same result. 

\noindent

\section{Introduction}

We consider finite undirected graphs without loops or multiple edges.
A graph  $G=({\mathcal V},{\mathcal E})$ has vertex set   ${\mathcal V}(G)$  and edge set ${\mathcal E}(G),$ where $| {\mathcal V}|=n$  and $|{\mathcal E}|=m$. 
The \emph{complement} $ G^C$  of a graph $G$ has the same vertex set as $G$ and its edge set consists of the non--edges of $G$. 
The complete graph on $n$ vertices is denoted $K_n$, while  $K_{1,n-1} $  denotes the star graph on $n$ vertices. 

 A \emph{clique} of a graph  $G$ is a  set of vertices that induce a maximal complete subgraph in $G.$  The same set of vertices induce a \emph{coclique} in $G^C$. 

 The \emph{adjacency matrix} of a graph $G$ is a  square matrix of size $|V(G)| \times |V(G)|$, defined by:
\[
A(G)_{ij} =
\begin{cases} 
1  & \text{if  } v_i \text{ and } v_j \text{ are adjacent } \\
0 & \text{otherwise}.
\end{cases}
\]
The \emph{degree matrix} of the graph $G$, $D(G)
$ is the diagonal matrix with $D(G)_{ii} =$ degree of vertex $v_i$. The \emph{Laplacian matrix} is defined by 
\[
L(G) = D(G) - A(G).
\]

The vertex set of the disjoint union $G_1\dot \cup G_2$ of two graphs $G_1$ and $G_2$, is  ${\mathcal V}(G_1)\cup  {\mathcal V}(G_2)$  and  its edge set is  ${\mathcal V}(G_1)\cup  {\mathcal V}(G_2). $  The join  $G_1\vee G_2$ of two graphs $G_1$ and $G_2$ is $(G_1^C\dot \cup G_2^C)^C.$ 
\emph{Threshold graphs} are graphs that can be constructed from an isolated vertex by successive disjoint unions  and/or  joins with other isolated vertices, as explained in Section 2 in more detail. The vertex set is partitioned into $r$ cells corresponding to the $r$ distance vertex degree. The \emph{creation sequence} of a threshold graph is the sequence of cell sizes.
Threshold graphs have been shown to possess  fascinating structural and spectral properties \cite{Merris, Fast, ScirihaFarrugia}.

An underlying graph of a threshold graph is the \emph{antiregular graph}, which consists of $r$ cells of size 1, except the first which is of size 2.
A threshold graphs can then be constructed from an antiregular graph with the same number of cells, by increasing the size of cells. 

Since each Laplacian eigenvalue of an antiregular graph is simple, the Laplacian eigenbasis is unique up to multiples. It is a notable property of threshold graphs, proved in \cite{MachareteDelVecchio}, that all threshold graphs of the same order share a common eigenbasis of vectors for their Laplacian eigenspaces, which is that of the antiregular graph on the same number of vertices.  We give an alternative proof of this theorem in the sequel.

The paper is organised as follows.
In Section 2, we describe threshold graphs and their construction from creation sequences.
In Section 3, we determine spectral properties of  the Laplacian matrix of threshold graphs and their associated eigenvectors, and give a new proof of the theorem mentioned above  (Theorem  \ref{eigenbasis}).

\section{Threshold Graphs}

\subsection{Basic properties}

  Threshold graphs have been discovered independently by many authors, in a number of different contexts. Initially created by Chv\'atal and Hammer  in relation to the knapsack problem \cite{ChvatalHammer},  
    threshold graphs were defined as those graphs that admit a weight function $w:{\mathcal V} \longrightarrow {\mathbb R}$ on the vertices such that
    there exists a \emph{threshold} $\xi \in {\mathbb R}$ satisfying
     $w(v_{\ell})+ w(v_k)> \xi$ if and only if 
         $\{v_{\ell}, v_k\}\in {\mathcal E}$. 
 The smallest subgraphs that do not admit a weight function are $2K_2$, $C_4$  and $P_4$. Interestingly, threshold graphs are also characterized as those graphs that are $2K_2$, $C_4$  and $P_4$ free \cite{Mahadev, GroneMerris, HammerKelmans}.   

Their construction can be encoded using a binary sequence.

\begin{definition}
 A {\rm threshold graph} on $n$ {\it vertices is defined by its} \em{binary creative sequence} {\rm (BCS)} $b_1,b_2,\ldots, b_n,$ where $b_1=b_2$.  An entry 0 in the BCS represents the  addition of an isolated vertex while an entry 1 means the addition of a dominating vertex. The entry $b_n$ is 1 for a connected threshold graph. 
\end{definition}

Starting from an isolated vertex, a given  BCS enables the construction of a unique threshold graph. The entries $b_1$ and $b_2$ are the same, and they are 1 or 0 depending on whether the first two vertices are, respectively, adjacent or not.  
A subsequence in the BCS corresponds to a clique if the entries are all equal to 1 and a coclique if they are all equal to 0.

\begin{definition}  \label{DegNSG}
 A threshold graph $C(a_1,a_2,\ldots, a_r)$ on $n$ vertices has a \emph{compact creative sequence} (CCS) 
   of positive integers, $a_1,a_2,\ldots, a_r$,  obtained from the  BCS 
 $b_1,b_2,\ldots, b_n$ of the  
  {\rm threshold graph}, such that  $\sum_{i=1}^r a_i=n,$ $a_1\geq2,$  and for $1\leq i\leq r$,  $a_i$  is the length of the $i$th subsequence of the same value (0 or 1) in BCS.
\end{definition}

It is clear that in a connected threshold graph, when starting with an isolated vertex, 
if the second vertex added is a \emph{dominating vertex}, i.e.~a vertex that is adjacent to all the other vertices of the graph, corresponding to entry $b_2=1,$ then $r$ is odd, while if $b_2=0$, then  
  $r$ is even.  The threshold graph has $r$ \emph{cells} 
  where each cell is a clique or a coclique. The degree of each vertex in a cell is the same. Therefore the number of distinct vertex degrees is the number $r$ of cells $A_1,A_2,\ldots A_r,$  of size $a_1,a_2,\ldots, a_r,$ respectively. A threshold graph is connected if and only if the subsequence of size $a_r$ in the BCS consists of 1 bits.

A threshold graph is a split graph, that is, its vertex set can be partitioned into two subsets, one inducing a 
 clique, the other a coclique. A  split graph is known to be $2K_2$, $C_4$  and $C_5$--free.  Note that $P_4$ is a split graph while $C_5$ is not a threshold graph.

\begin{definition} 
 The connected \emph{antiregular 
  graph} $C(2,1,1,\ldots,1)$ is the threshold graph  having $r$ cells with the least number of vertices. \end{definition}
  The antiregular graph has the maximum possible number of distinct vertex degrees among all connected graphs of order $n$, as it has only two vertices with the same degree.
  R. Merris 
  showed that every tree on $r+1$ vertices is a 
  subgraph of $A_{r+1}$  \cite{Merris}.  In  \cite{ScirihaFarrugia}, a threshold graph having $r$ cells is constructed from  $A_{r+1}$ by blowing up the size of selected cells to reach the cell sizes given by the CCS.  Hence we say that 
a   threshold graph with $r$ cells $A_1, \ldots A_r$ has the antiregular graph $C(2, 1,1,\ldots, 1)$ as an underlying induced subgraph also with $r$ cells.

 A threshold graph $C(a_1, \ldots, a_r), \ a_1\geq 2,$ has $r$ cells $A_1, \ldots , A_r$ that
correspond to $r$ distinct degrees.  Let $N_i$ be the open neighbourhood of each of the vertices in the coclique cell $A_i$. Note that for odd $r$, each of the $a_{2k}$ vertices in the coclique $A_{2k}$, for $1\leq k \leq \frac{r-1}{2},$
has the same  neighbourhood  $N_{2k}$ which is the disjoint union of cells $A_{2k+1}\dot \cup A_{2k+3}\dot \cup \ldots \dot \cup A_{r}.$ Therefore the  neighbourhoods are strictly nested:
  $N_{2} \supset N_{4}\supset  \ldots \supset N_{r-1}.$   Also,  each of the $a_{2k+1}$ vertices in a clique $A_{2k+1}$
has the same closed neighbourhood  $N_{2k+1}$ which is ${\mathcal V}\backslash \left (A_{r-1} \dot \cup A_{r-3} \dot \cup \ldots \dot \cup A_{2k}\right )$. For an even number of cells, the structure of a threshold graph  is similar. The structure of the coclique neighbourhoods has earned the threshold graphs, which are a subclass of split graphs, the name of \emph{nested split graphs} (NSG).
Note that we label a threshold graph  according to the construction corresponding to 
   its BCS.

    Figure \ref{NSG_schematic} shows the cells in the threshold graphs 
    $C(2,3,4,5,6,7)$ having 27 vertices, 225 edges and 6 cells, and   $C(2,1,1,5,1,3,1)$ on 14 vertices, 36 edges and 7 cells.
    
    In a graph $G$, the partition $V_1, V_2, \ldots, V_r$ of the vertex set is said to be an \emph{equitable partition} if $\bigcup_j V_j = V(G)$ and $| N(i) \cap V_j |$ is a constant for all vertices $i \in V_i$, where $j= 1, 2, \ldots, r$.
In a threshold graph, there is an equitable partition of the vertex set with each of the $r$ cells containing vertices of a unique degree.

\begin{figure}[!h]
\begin{center}
\includegraphics[trim=3cm 4.5cm 6.5cm 2cm, clip=true,width=0.8\textwidth]{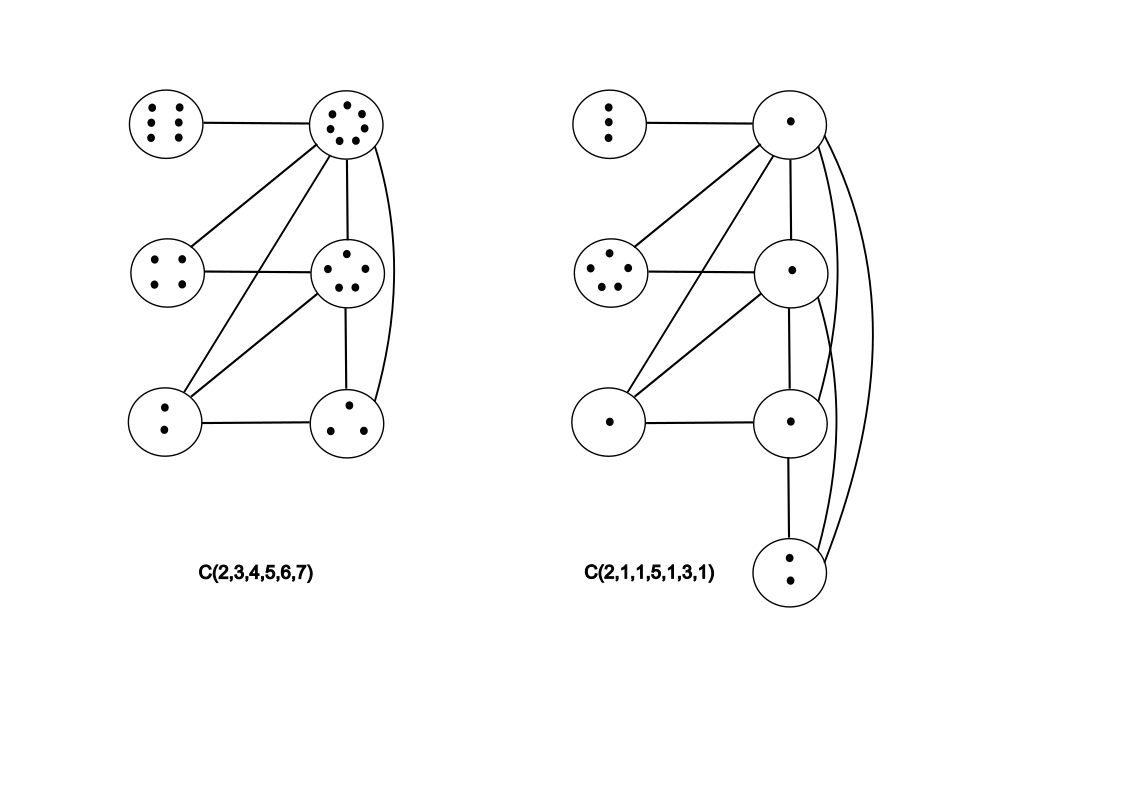}
\caption{Schematic diagrams for the threshold graphs $C(2,3,4,5,6,7)$ and $C(2,1,1,5,1,3,1)$.}  
\label{NSG_schematic}
\end{center}
\end{figure}

 From Definition \ref{DegNSG}
 and Figure \ref{NSG_schematic},  the following characterizations of  the nested neighbourhoods of the cells is immediate.

 \begin{theorem}
     \label{TheoNested}
Let $G$ be a graph with $r$ distinct degrees, where $r$ is odd, inducing an equitable vertex  partition into $r$ cells $A_i$   
such that each cell corresponds to a clique $A_{2k+1}$ or a coclique $A_{2k}$.

Then (i) the closed neighbourhoods of the clique vertices $v_i \in V_i$ satisfy  $N[v_1]\subset N[v_3]\subset \ldots \subset N[v_r] $; and

(ii)  the open neighbourhoods of coclique vertices $v_j \in V_j$ satisfy $N(v_{r-1})\subset N_(v_{r-3})\subset \ldots \subset N(v_{2}) $;

 if and only if 
     $G$ is a threshold graph.
 \end{theorem}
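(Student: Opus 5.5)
The plan is to prove the two directions separately, using the creation-sequence description of threshold graphs from Definition \ref{DegNSG} and the $2K_2$, $C_4$, $P_4$--free characterization quoted in Section 2.

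\emph{($\Leftarrow$) $G$ threshold implies (i) and (ii).} Write $G=C(a_1,\ldots,a_r)$ with $r$ odd, so the cells of odd index are cliques and those of even index are cocliques. Each vertex added in cell $A_j$ is isolated or dominating with respect to the vertices already present. Hence a vertex in a coclique cell $A_{2k}$ is adjacent exactly to the vertices of the later dominating cells, which gives $N(v_{2k})=A_{2k+1}\cup A_{2k+3}\cup\cdots\cup A_r$. These sets strictly decrease as $k$ grows, which is (ii). For a clique cell $A_{2k+1}$, the vertex $v_{2k+1}$ sees the following vertices.
\begin{itemize}
\item[$\bullet$] All vertices present when it was added, that is, $A_1,\ldots,A_{2k+1}$.
\item[$\bullet$] The later clique cells $A_{2k+3},\ldots,A_r$.
\item[$\bullet$] No vertex of the later coclique cells.
\end{itemize}
So $N[v_{2k+1}]=\mathcal V\setminus(A_{2k+2}\cup A_{2k+4}\cup\cdots\cup A_{r-1})$. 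As $k$ increases, fewer coclique cells are removed, so these sets increase strictly, which is (i). Strictness holds because every cell is nonempty.

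\emph{($\Rightarrow$) (i) and (ii) imply $G$ threshold.} The plan is to show that $G$ contains no induced $2K_2$, $C_4$ or $P_4$. I expect this to be the main obstacle, because the hypotheses must force the cross-adjacencies.

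First fix the adjacency between cells. Take a clique vertex $u\in A_{2k+1}$ and a coclique vertex $w\in A_{2j}$. Then $u\in N(w)$ if and only if $w\in N[u]$. Nestedness (i) says that once a coclique cell meets $N[v_{2k+1}]$, it meets $N[v_{2l+1}]$ for every $l>k$. Nestedness (ii) says the analogous monotonicity in $j$. Equitability says each such relation is all-or-nothing between whole cells.

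Next, order all vertices by degree. Using the nestedness, I will show that for any two vertices $x,y$ with $\deg x\le \deg y$ we have $N(x)\setminus\{y\}\subseteq N[y]$. In other words, the vicinal preorder is total.

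The forbidden subgraphs then follow from this totality. In each of $2K_2$, $C_4$ and $P_4$ there is a pair of vertices whose neighbourhoods, with each other removed, are incomparable: the two endpoints of $P_4$, two opposite vertices of $C_4$, and two vertices from different edges of $2K_2$. Totality of the preorder rules this out.

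The delicate step is comparing clique and coclique vertices. Closed and open neighbourhoods mix here, and the hypotheses only compare vertices of the same type. I would first try the following argument. Take a coclique vertex $w$ with $N(w)\not\subseteq N[u]$ for some clique vertex $u$. Then $w$ has a neighbour $z$ outside $N[u]$. This $z$ must be a clique vertex, since coclique vertices are adjacent only to clique vertices. From $z\notin N[u]$ we get $u\notin N[z]$. Now compare $N[z]$ and $N[u]$ using (i) to derive a contradiction. If this fails in general, I would use the degree ordering to position $w$ relative to $u$ and $z$.

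Finally, with no induced $2K_2$, $C_4$ or $P_4$, the characterization cited from \cite{Mahadev, GroneMerris, HammerKelmans} shows that $G$ is a threshold graph.
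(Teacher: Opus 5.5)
Your direction ``threshold $\Rightarrow$ (i), (ii)'' is correct. It is essentially what the paper means when it calls the theorem immediate from Definition~\ref{DegNSG}. The paragraph before the theorem computes the same sets $N_{2k}=A_{2k+1}\cup A_{2k+3}\cup\cdots\cup A_r$ and $N_{2k+1}$. Your formula $\mathcal V\setminus(A_{2k+2}\cup\cdots\cup A_{r-1})$ is the right one; the paper's text also removes $A_{2k}$, which is a slip. The paper gives no argument at all for the converse.

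Your converse has a genuine gap at ``Equitability says each such relation is all-or-nothing between whole cells.'' An equitable partition only fixes the \emph{number} $|N(v)\cap A_j|$ for $v\in A_i$; a perfect matching between two cells is equitable. The gap cannot be closed from the hypotheses as written, because the statement read literally is false.

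Take the gem $K_1\vee P_4$: an induced path $w\,c\,c'\,w'$ plus a vertex $d$ adjacent to all four.
\begin{itemize}
\item The degrees are $2,3,3,2,4$, so $r=3$, with $A_1=\{c,c'\}$ a clique, $A_2=\{w,w'\}$ a coclique and $A_3=\{d\}$.
\item The partition is equitable: the counts into $(A_1,A_2,A_3)$ are $(1,1,1)$ for $c,c'$, $(1,0,1)$ for $w,w'$, and $(2,2,0)$ for $d$.
\item (i) holds, since $N[c],N[c']\subsetneq\mathcal V=N[d]$, and (ii) is vacuous.
\end{itemize}
Yet $w\,c\,c'\,w'$ is an induced $P_4$. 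In particular, your planned totality of the vicinal preorder fails for $w,w'$: they have equal degree but $N(w)=\{c,d\}\not\subseteq N[w']=\{w',c',d\}$. So the real obstacle is not the clique-versus-coclique comparison you flagged. It is that (i) and (ii) never compare two vertices of the \emph{same} cell.

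What is missing is an assumption the paper uses tacitly when it speaks of ``the'' neighbourhood $N_{2k}$ of every vertex of $A_{2k}$: all vertices of a cell have identical neighbourhoods. It suffices to require this for the coclique cells. With it, the converse is short.
\begin{itemize}
\item For $j<l$, (i) gives $v_{2j+1}\in N[v_{2j+1}]\subseteq N[v_{2l+1}]$, so the clique cells are mutually joined.
\item An edge $v_{2j}v_{2l}$ would give $v_{2j}\in N(v_{2l})\subseteq N(v_{2j})$ by (ii), so the coclique cells are mutually non-adjacent.
\item Hence $G$ is split, with clique $K=\bigcup_k A_{2k+1}$ and independent set $I=\bigcup_k A_{2k}$. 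This excludes $2K_2$ and $C_4$.
\item An induced $P_4$ in a split graph must have its two ends in $I$ and its middle edge in $K$, so its ends would be vertices of $I$ with incomparable neighbourhoods. But (ii) together with the twin assumption makes the neighbourhoods of $I$ a chain.
\end{itemize}
The mixed comparison you worried about is then automatic, since $N(w)\subseteq K\subseteq N[u]$ for $w\in I$ and $u\in K$.
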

 For even $r$, the nesting of the neighbourhoods is similar.
 
  Theorem \ref{TheoNested}  can be restated in terms of the BCS. This gives a crucial characterization of threshold graphs in terms of their adjacency matrix with respect to the  BCS labelling.

 \begin{theorem}
    A graph $G$ with adjacency matrix     $\mathbf{A}=(a_{ij})$  is a threshold graph if and only if 
     there is a permutation of the vertex set that yields an corresponding adjacency matrix whose block form $(B_{ij})$  has alternating cliques and cocliques $B_{ii}$ on the diagonal, and  the off diagonal blocks 
     \[
     B_{i(i+1)}, B_{i(i+2)},\ldots,B_{ir}
     \]
     alternate between $\mathbf{J}$ and $\mathbf{0}$, where   $\mathbf{J}$ is the matrix with all entries equal to 1, and $\mathbf{0}$ is the matrix with all entries equal to $0$, starting with $\mathbf{J}$ if $B_{ii}$ represents a clique, and with $\mathbf{0}$ if $B_{ii}$ represents a coclique. 
      \end{theorem}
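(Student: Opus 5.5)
The plan is to prove both directions directly from the binary creative sequence, labelling the vertices in the order they are added.

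\emph{Forward direction.} Let $G$ be a threshold graph with BCS $b_1,\ldots,b_n$ and CCS $a_1,\ldots,a_r$. Order the vertices by creation time, so that cell $A_i$ is the $i$th maximal run of equal bits.

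\begin{enumerate}
\item \textbf{Adjacency rule.} For $u$ created before $w$, $u\sim w$ if and only if $b_w=1$. The reason is that a vertex is adjacent to all earlier vertices exactly when it is added as dominating, and later additions never change adjacencies among earlier vertices.
\item \textbf{Diagonal blocks.} Inside a run all bits are equal. A run of $1$'s is therefore a clique and a run of $0$'s a coclique. Consecutive runs have opposite bits, so cliques and cocliques alternate along the diagonal. The first cell is an exception, since $b_1$ is not meaningful; but $b_1=b_2$ puts the first vertex in $A_1$ with the right type.
\item \textbf{Off-diagonal blocks.} For $j>i$, the block $B_{ij}$ is $\mathbf{J}$ if $A_j$ consists of $1$'s and $\mathbf{0}$ otherwise. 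It depends only on $j$, by the adjacency rule.
\item \textbf{Alternation.} The bit values of $A_{i+1},A_{i+2},\ldots$ alternate starting with the value opposite to that of $A_i$.
\end{enumerate}

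The last step seems to produce $B_{i(i+1)}=\mathbf{0}$ when $A_i$ is a clique, which is the opposite of the statement. So I will reconcile conventions before writing the proof. The statement's convention must come from the reverse labelling in Theorem \ref{TheoNested}, where the clique $A_{2k+1}$ has closed neighbourhood ${\mathcal V}\setminus(A_{r-1}\cup\cdots\cup A_{2k})$, i.e.\ it is adjacent to the later cliques. I will fix the permutation accordingly: reverse the creation order, or equivalently read the rule from the perspective of the later-created vertex. Then the block above the diagonal in row $i$ is determined by the type of $A_i$ itself. For a clique row, the next cell $A_{i+1}$ is a coclique, and the row reads $\mathbf{J},\mathbf{0},\mathbf{J},\ldots$; for a coclique row it reads $\mathbf{0},\mathbf{J},\ldots$. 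Symmetry of $\mathbf{A}$ then determines the blocks below the diagonal. Pinning down this labelling cleanly, and checking it against Theorem \ref{TheoNested} including the first cell, is the main bookkeeping obstacle.

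\emph{Converse.} Suppose a permutation gives such a block form with $r$ diagonal blocks. I will reconstruct a BCS and show by induction on $r$ that the graph is built by successively adding a clique of dominating vertices or a coclique of isolated vertices.

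\begin{enumerate}
\item \textbf{Peeling the extreme cell.} Take the cell at the appropriate end of the ordering, say $A_r$ (or $A_1$ under the reversed convention). By the alternation pattern, its vertices are either all adjacent to every vertex of the remaining union, when its off-diagonal blocks are all $\mathbf{J}$ and it is a clique, or adjacent to none of them, when the blocks are all $\mathbf{0}$ and it is a coclique. Adding that cell is therefore $a_r$ consecutive BCS entries equal to $1$ or to $0$, since dominating vertices added one at a time automatically form a clique.
\item \textbf{Induction.} Deleting the cell leaves a matrix of the same form with $r-1$ blocks, so induction applies.
\item \textbf{Base case.} A single clique or coclique is a threshold graph, with $b_1=b_2$.
\end{enumerate}

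Alternatively, the converse follows from the block form via Theorem \ref{TheoNested}: the patterns of $\mathbf{J}$'s give exactly the nested closed and open neighbourhoods, and the theorem then yields that $G$ is threshold. I would present the induction as primary, since it is self-contained.
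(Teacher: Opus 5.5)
Your steps 1--4 are correct. So is your observation that in creation order a clique row reads $\mathbf{0},\mathbf{J},\mathbf{0},\ldots$ to the right of the diagonal. That creation-order form is exactly what the paper writes down right after the statement ($B_{i1}=\cdots=B_{i(i-1)}$ is $\mathbf{J}$ iff $B_{ii}$ is a clique), and it later uses it as property (A). The paper gives no proof of the theorem beyond calling it a restatement of Theorem~\ref{TheoNested}.

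\textbf{The gap is your reconciliation step.} If you reverse the order, then for $j>i$ the block $B_{ij}$ is decided by the later-created cell, which is now $A_i$. So all blocks to the right of the diagonal in row $i$ coincide: a clique row reads $\mathbf{J},\mathbf{J},\ldots$, not $\mathbf{J},\mathbf{0},\mathbf{J},\ldots$, and your sentence contradicts the one before it. Theorem~\ref{TheoNested} does not support a reversed convention either, since $N_{2k}=A_{2k+1}\dot\cup A_{2k+3}\dot\cup\cdots\dot\cup A_r$ is creation order.

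In fact no permutation can rescue the wording as printed. With alternating diagonal types, it forces $B_{ij}=\mathbf{J}$ for $j>i$ exactly when $A_j$ is a coclique. This fails in both directions:
\begin{itemize}
\item Take cells $\{1,2\}$ (clique), $\{3,4\}$ (coclique), $\{5,6\}$ (clique). The printed pattern gives $(K_4-e)\dot\cup K_2$, which has an induced $2K_2$ and so is not threshold.
\item The threshold graph $K_4-e$ (BCS $0,0,1,1$) admits no such ordering. Its last cell would have to be a coclique joined to everything, hence a single dominating vertex. The remaining connected $P_3$ would then have to be one clique cell with no edges to earlier cells, which is impossible.
\end{itemize}

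Your converse breaks at the same place. Under the printed pattern, the column of $A_r$ is all $\mathbf{J}$ precisely when $A_r$ is a coclique. So the peeled cell is either a coclique joined to everything or a clique joined to nothing, and once $|A_r|\ge 2$ neither is a run of equal BCS bits. Your claim ``all $\mathbf{J}$ and a clique, or all $\mathbf{0}$ and a coclique'' holds only for the creation-order pattern.

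\textbf{How to fix it.} State explicitly that the clause ``starting with $\mathbf{J}$ if $B_{ii}$ represents a clique'' is the wrong way round. Equivalently, put the condition on $B_{i1},\ldots,B_{i(i-1)}$, as the paper's display does. Then prove that corrected statement. For it, your creation-order argument and your peeling induction on $A_r$ form a complete proof. The only remaining bookkeeping is that a first cell of size one merges into $A_2$, because $b_1$ is set equal to $b_2$.
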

    
     Therefore, the $r\times r$ block  form of the adjacency matrix ${\bf A}=(B_{ij})$   of a threshold graph labelled according to the BCS is

      \[
      {\bf A}=(B_{ij})
= \left(
\begin{array}{ccccc}
 B_{11} & B_{12} &B_{13}  &\ldots & B_{1r}  \\
B_{21} &B_{22} & B_{23} &\ldots  & B_{2r}  \\
B_{31} &B_{32}& B_{33}&\ldots & B_{3r}\\
 \vdots &  \vdots  &  \vdots & \ddots &  \vdots
  \\
 B_{r1}  &B_{r2} &B_{r3}  &\ldots & B_{rr}\\
\end{array}
\right),
\]
where for $1\leq i\leq r$,
\[
B_{i1}= B_{i2}=\ldots  = B_{i(i-1)}=\left\{
\begin{array}{ll}
 \mathbf{0}& \hbox{if $B_{ii}$ is  a coclique}\\
    \mathbf{J}  &  \hbox{if $B_{ii}$ is  a clique.} \\
 \end{array}
 \right.
\]

\label{NSGBlockAdjacency}

Figure \ref{FigNSG27v7Cells2345678} shows the threshold graph on 27 vertices labelled according to the creation sequence $C(2,3,4,5,6,7)$,  which thus has an even number of cells. 
\begin{figure}[!h]
    \centering
    \includegraphics[trim=2cm 0cm 0cm 1.9cm, clip=true, width=\linewidth]{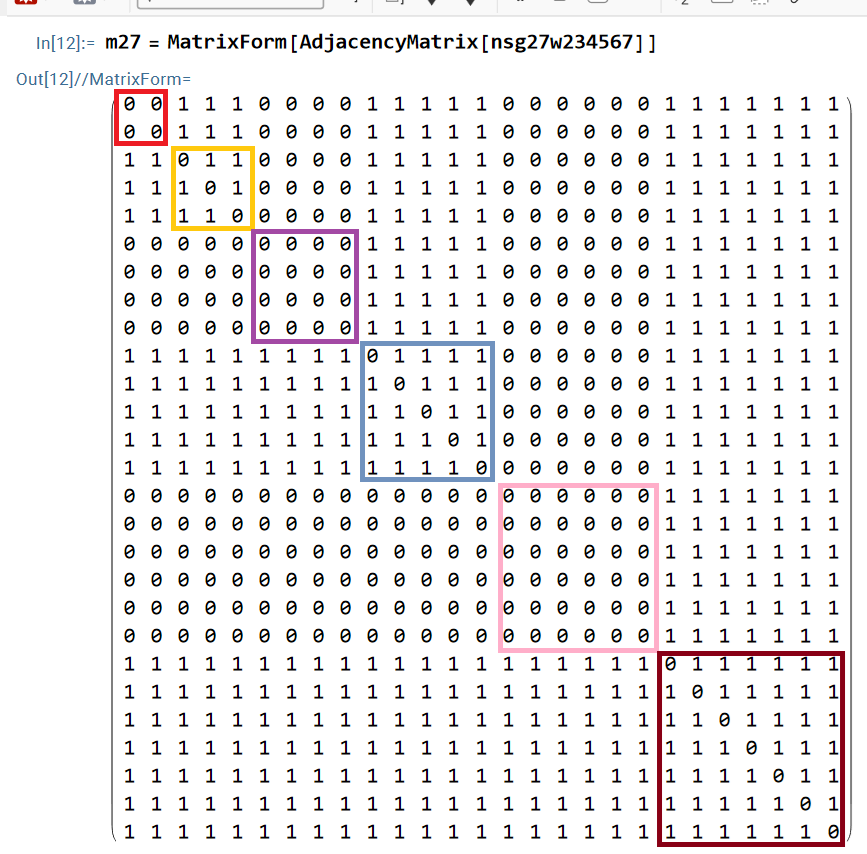}
    \caption{The adjacency matrix of the threshold graph on 27 vertices labelled according to the creation sequence $C(2,3,4,5,6,7)$. Note the block matrices on the diagonal which are alternately $\mathbf{0}$ and $\mathbf{J} - \mathbf{I}$.}
    \label{FigNSG27v7Cells2345678}
\end{figure}

Note that 
$a_{12}=0$  when $r$ is even. In this case the twin vertices 0 and 1 are duplicates, and the first diagonal block $B_{11} = \mathbf{0}$. When $r$ is odd, $B_{11} = \mathbf{J} - \mathbf{I}$.

A Ferrers--Young diagram (FYD) is used to show the partition of $2m$ as the sum of the vertex degrees of a graph. For a graph on $n$  vertices with degree sequence $d= \rho_1, \rho_2, \ldots \rho_n $,  the $i$th  row of a FYD has $\rho_i$ boxes. The conjugate degree sequence is defined to be $d^{\star}= 
\{\rho_1^{\star}, \rho_2^{\star}, \ldots, \rho_{\Delta}^{\star} \}$  where
$\rho_i^{\star}:= \left|\{j: 1\leq j\leq n \ \&  \  \rho_j \geq i\}\right|$ and $\Delta$ is the maximum degree of the graph. 

The Durfee Square (DS)  of a partition in a FYD  has size $s$ if $s$ is the largest number such that the partition of the FYD contains at least $s$ parts with values at least $s$. Equivalently, the Durfee square is the largest square that is contained within a partition's Ferrers--Young diagram.

Threshold graphs are unigraphs, that is they are determined by their degree sequence. Ruch and Gutman used FYDs to show their inverse stepwise shape for the degree sequence \cite{RuchGutman}. They proved that the degree sequence of a threshold graph majorizes the degree sequences of all graphs on the same number of edges. Merris showed that the FYD for a threshold graph has a particular shape \cite{Merris1994}, shown in Figure \ref{fig:fydshape}. 
\begin{figure}[!h]
    \centering
    \begin{subfigure}{.45\textwidth}
  \centering
   \includegraphics[trim=0cm 0cm 0cm 0cm, clip=true,width=0.8\linewidth]{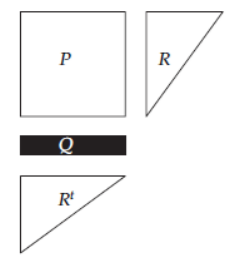}
  \caption{}
\end{subfigure}%
   \begin{subfigure}{.55\textwidth}
  \centering
    \includegraphics[trim=0cm 2cm 5cm 0cm, clip=true,width=\linewidth]{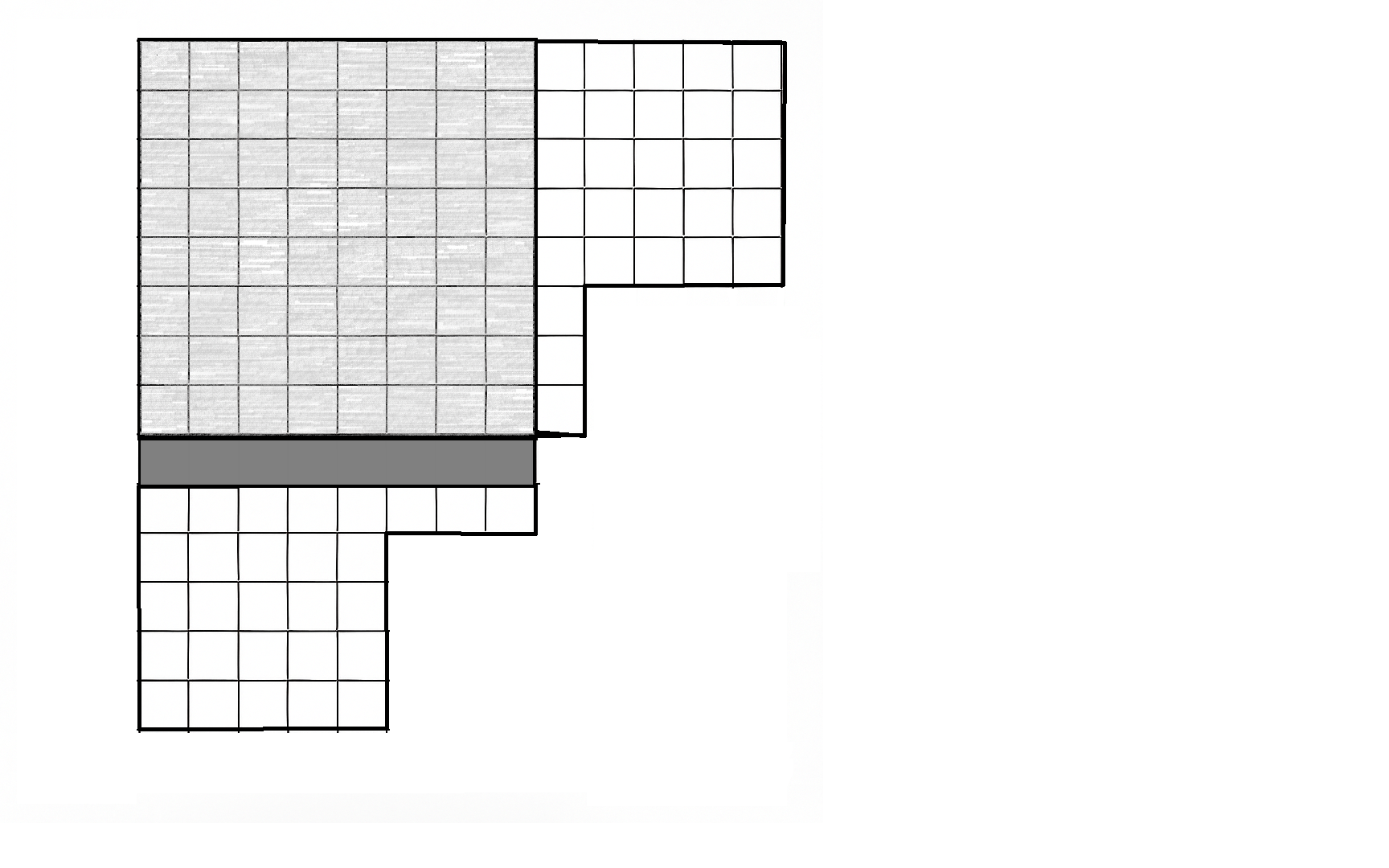}
  \caption{}
\end{subfigure}%
    \caption{Figure (a) shows the shape of the FYD for a threshold graph: $P$ is the DS size $s$, $Q$ is a row of length $s$ and $R^t$ is the transpose of shape $R$. Figure (b) is the FYD of the threshold  graph $C(2,3,4,5)$: the lengths of the rows give the degree sequence $\{ \rho_i\}$, while the lengths of the columns are the conjugate degree sequence $\{ \rho^*_i \}$.}
    \label{fig:fydshape}
\end{figure}
\begin{theorem}[\cite{Merris, Mahadev}]
    \label{Theo:chznNSG0}
 Each of  the following statements characterizes a threshold graph.
\begin{enumerate}    [(i)]
    \item Over the DS, $\rho_i^{\star} =\rho_i+1$;
    \item The shape of the FYD is as shown in Figure \ref{fig:fydshape};
    \item the graph is $P_4,\ C_4,\  2K_2$--free. 
    \item the Laplacian eigenvalues of the graph are given by 0 and the lengths $\rho_i^{\star}$.
\end{enumerate}
\label{thm:nsgchar}
\end{theorem}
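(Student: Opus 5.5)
We prove each of (i)--(iv) equivalent to (iii), the forbidden subgraph characterization, passing through the nested neighbourhood structure of Theorem \ref{TheoNested} and the block form of the adjacency matrix. That the threshold graphs are exactly the $P_4, C_4, 2K_2$--free graphs is quoted in Section 2 \cite{Mahadev, GroneMerris, HammerKelmans}. Our first step is to show that a $P_4, C_4, 2K_2$--free graph always has an isolated or a dominating vertex, and that this property is inherited by induced subgraphs. Suppose $G$ has neither. Take a vertex $v$ of maximum degree and a non-neighbour $w$ of $v$. Since $w$ is not isolated, it has a neighbour $x$, and $x$ is adjacent to $v$ unless $2K_2$ or $P_4$ appears. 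Comparing $N(x)$ with $N(v)$ then produces an induced $P_4$, $C_4$ or $2K_2$. Peeling vertices off one at a time therefore recovers a BCS, so $G$ is threshold. The converse is immediate from the block form: in a threshold graph any four vertices respect the nesting of neighbourhoods, whereas each of $P_4, C_4, 2K_2$ contains two vertices with incomparable neighbourhoods.

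For (i) and (ii), we work from the BCS labelling with degrees in non-increasing order. Using the block form, the row of vertex $i$ in $A+I$ restricted to the first $s$ columns (where $s$ is the Durfee size) is a full initial segment, and the diagram is symmetric about the diagonal shifted by one. Counting column lengths then gives $\rho_i^\star=\rho_i+1$ for $i\le s$; this equality is exactly the statement that the FYD has the shape $P,Q,R,R^t$ of Figure \ref{fig:fydshape}, so (i) and (ii) are equivalent to each other. To show that (i) forces thresholdness, we use the Erd\H{o}s--Gallai inequalities $\sum_{i\le k}\rho_i\le \sum_{i\le k}(\rho_i^\star-1)$ for $k\le s$, which hold in every graph. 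Equality for all $k\le s$ forces vertex $k$ to be adjacent to all of $1,\dots,k-1$ and to every vertex of degree at least $k$. This is precisely the nested structure of Theorem \ref{TheoNested}, so $G$ is threshold.

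For (iv), we induct on the BCS. Adding an isolated vertex appends a $0$ eigenvalue and leaves the conjugate sequence unchanged. Adding a dominating vertex to a graph $H$ on $n-1$ vertices replaces the Laplacian eigenvalues $0=\mu_1\le\mu_2\le\cdots$ of $H$ by $0$, $\mu_i+1$ for $i\ge 2$, and $n$. On the FYD side, the same operation raises every degree by one and adds a row of length $n-1$, which adds one to each $\rho_i^\star$ and inserts a new part $n$. This matches the spectral change, so the Laplacian spectrum of a threshold graph is $0$ together with the $\rho_i^\star$. For the converse we use the Grone--Merris majorization: the Laplacian spectrum of any graph is majorized by $d^\star$. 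If the spectrum equals $d^\star$, equality holds throughout, and we must deduce the Durfee square condition (i), for instance via the trace identity $\sum\mu_i^2=\sum\rho_i^2+\sum\rho_i$ compared with $\sum(\rho_i^\star)^2$.

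The hardest step is this converse of (iv). The comparison $\sum(\rho_i^\star)^2\ge\sum\rho_i^2+\sum\rho_i$, with equality iff (i) holds, should be checked by summing squares along hooks of the FYD. If that fails, we would instead cite the Grone--Merris/Bai equality case from \cite{Merris}.
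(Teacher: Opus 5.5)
The paper does not prove this theorem; it quotes it from \cite{Merris, Mahadev}, so there is no in-text argument to compare with. Judged on its own, your plan is sound:
\begin{enumerate}[(a)]
\item peeling off isolated or dominating vertices gives (iii);
\item the conjugate Erd\H{o}s--Gallai inequalities give (i)$\Leftrightarrow$threshold, with (ii) just the pictorial restatement of (i);
\item induction along the BCS with the join and union spectral rules gives threshold$\Rightarrow$(iv).
\end{enumerate}
A few compressed steps each need one more line:
\begin{enumerate}[(a)]
\item In the peeling lemma, the case $x\not\sim v$ is excluded only by maximality of $\deg v$: with no $2K_2$ or $P_4$ you get $N(x)\supseteq N(v)\cup\{w\}$.
\item In the converse of (iii), the ``nesting'' must compare $N(u)\setminus\{v\}$ with $N(v)\setminus\{u\}$ (the vicinal order). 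Open neighbourhoods of two adjacent clique vertices are never nested.
\item In (i)$\Rightarrow$threshold, the forced neighbours of $k\le s$ already number $\rho_k^\star-1=\rho_k$. So $N(k)=\{v\ne k:\rho_v\ge k\}$ exactly, and hence $N(i)=\{1,\dots,\rho_i\}$ for every $i>s$. This is the nested split structure.
\item In the dominating-vertex step, $d^\star(H)$ must be padded with zeros to length $n-2$. Then the extra zero eigenvalues of a disconnected $H$ match the new parts equal to $1$.
\end{enumerate}

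The only incomplete step is the one you flag, the converse of (iv), and ``summing squares along hooks'' alone will not close it. Let $s$ be the Durfee size, $a_k=\rho_k-k$ and $b_k=\rho_k^\star-k$. Summing $2(i-j)-1$ over the boxes $(i,j)$ of the $k$th diagonal hook gives
\[
\sum_j(\rho_j^\star)^2-\sum_i\rho_i^2-\sum_i\rho_i=\sum_{k=1}^{s}\bigl(b_k^2-(a_k+1)^2\bigr)=\sum_{k=1}^{s}c_kw_k,\qquad c_k=b_k-a_k-1,\ \ w_k=a_k+b_k+1.
\]
Individual hook terms can be negative: for $P_4$ they are $5$ and $-1$. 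For a non-graphic partition such as $(3)$ the whole total is negative. So the inequality genuinely depends on graphicality.

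The ingredient you need is already in your (i) paragraph. The conjugate Erd\H{o}s--Gallai inequalities say precisely that $C_m=\sum_{k\le m}c_k\ge 0$ for $m\le s$, while $w_1>w_2>\cdots>w_s\ge 1$. Abel summation, $\sum_k c_kw_k=\sum_{m=1}^{s}C_m(w_m-w_{m+1})$ with $w_{s+1}=0$, then gives $\sum_j(\rho_j^\star)^2\ge\sum_i\rho_i^2+\sum_i\rho_i$. Equality holds iff all $C_m=0$, i.e.\ iff (i).

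Since $\mathrm{tr}\,L^2=\sum_i\rho_i^2+\sum_i\rho_i$, a Laplacian spectrum equal to $0$ together with the $\rho_j^\star$ forces equality, hence (i), hence threshold. The Grone--Merris--Bai majorization plays no role and can be dropped, as can the fallback citation.
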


\section{The Laplacian Matrix  of Threshold Graphs}

A graph is Laplacian integral if the spectrum
of its Laplacian matrix consists entirely of integers. A number
of papers on Laplacian matrices investigate the class of
Laplacian integral graphs (see \cite{HararySchwenk, Kirkland07, So99}). 
Laplacian graph eigenvalues are algebraic integers. Merris \cite{Merris1994} showed that threshold graphs are Laplacian integral. Since cographs are constructed using the join and disjoint union of smaller cographs, starting with $K_1$, it follows that they  are Laplacian integral. Hence, threshold graphs, which are a subfamily of cographs, are Laplacian integral. 

From Theorem 4.1 of \cite{ScirihaFarrugia},  the $n\times n$  adjacency matrix {\bf A} of a threshold graph $G$ is obtained from its Ferrers--Young
diagram FYD, representing the degree sequence of a $n$-vertex graph, as follows. The $i$th box is inserted
in each $i$th row and filled with a zero entry. The rest of the existing boxes are filled with the entry 1.
Boxes are now inserted so that a $n\times n$  array of boxes is obtained. Each of the remaining empty boxes is
filled with zero. The $n\times n$  array of 0-1-numbers obtained is the adjacency matrix $A$ for a labelling of the vertices according to the non--increasing degree sequence.

Note that from part (i) of Theorem \ref{thm:nsgchar}, the FYD can be constructed. The columns beyond the Durfee square correspond to the coclique vertices, and their lengths $\rho_k^{\star} = \rho_k$ give those Laplacian eigenvalues which are equal to the degree of these vertices, whereas those over the DS have lengths $\rho_k^{\star} = \rho_k+1$, as stated in (iv).

    Bai \cite{BaiLapEigs2011} proved the Grone-Merris conjecture that states that for any graph  the Laplacian eigenvalues are at most $\rho_k^{\star}$.  Moreover, as indicated in part (iv) of Theorem \ref{thm:nsgchar}, the Laplacian eigenvalues are exactly 0 and    the lengths $\rho_i^{\star}$ if and only if  the graph is  threshold. The Laplacian spectrum of a graph on $n$ vertices and the lengths $\rho_i^{\star}$ cannot agree in $n-1$ places. Kirkland \cite{Kirk2009} gave a constructive characterization of the graphs $G$ for
which the spectrum of $G$ and its degree sequence share all but two elements.

For any graph, it is well known that the degree sequence of a graph contains at least one repeated integer.

For a graph   on an even number $n$ of vertices, the monotonic non--increasing degree sequence ${n-1,n-2,\ldots,\frac{n}{2}+1, \frac{n}{2}, \frac{n}{2}, \frac{n}{2}-1, \ldots, 1}$ identifies the antiregular graph $C(2,1,1,\ldots,1)$ 
on $n$ vertices. Remarkably, as the following result shows, the Laplacian eigenvalues of an antiregular graph may be derived from the degree sequence \cite{ScirihaBorgShermanAntiregular}.

\begin{lemma}
     The Laplacian of the antiregular graph $C(2,1,1,\ldots,1)$  on $n$ vertices has $n$ distinct integer eigenvalues, $0, 1, 2, \ldots, \lceil \frac{n}{2} \rceil -1,  \lceil \frac{n}{2} \rceil +1, \ldots, n$.
 \end{lemma}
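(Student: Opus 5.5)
We will use part (iv) of Theorem \ref{thm:nsgchar}: since $C(2,1,\ldots,1)$ is a threshold graph, its Laplacian eigenvalues are $0$ together with the conjugate degrees $\rho_1^{\star},\ldots,\rho_{n-1}^{\star}$, where $\rho_i^{\star}$ counts the vertices of degree at least $i$. The proof then reduces to computing the conjugate of the degree sequence of the antiregular graph, which is explicit. We should not need to construct eigenvectors.

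First we pin down the degree sequence for all $n$. For even $n$ the paper already gives it: $n-1,n-2,\ldots,\frac n2+1,\frac n2,\frac n2,\frac n2-1,\ldots,1$, which is the $n-1$ distinct values $1,\ldots,n-1$ with $\frac n2$ repeated. For odd $n$ we must handle the disconnected versus connected convention. The connected antiregular graph on $n$ vertices has degrees $1,\ldots,n-1$ with the value $\lfloor n/2\rfloor$ repeated. We will check this by induction via the BCS. Adding a dominating vertex to the complement of a smaller antiregular graph (or an isolated vertex) shifts every degree by one and inserts the new extreme degree. Equivalently, $A_n = K_1\vee(A_{n-1}^{C})$ and $A_{n-1}^C$ is the disconnected antiregular graph $K_1\dot\cup A_{n-2}$. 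So the degrees of $A_n$ are $n-1$ together with $1+$(degrees of $A_{n-2}$) and $1$, which gives the recursion and the repeated value $\lfloor n/2\rfloor$ by induction.

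Next we compute $\rho_i^{\star}$ for the multiset $\{1,2,\ldots,n-1\}\cup\{h\}$ with $h=\lfloor n/2\rfloor$. The vertices of degree $\geq i$ are those among $i,\ldots,n-1$, which number $n-i$, plus one more if $h\ge i$. Hence
\[
\rho_i^{\star}=\begin{cases} n-i+1, & 1\le i\le h,\\ n-i, & h< i\le n-1.\end{cases}
\]
As $i$ runs over $1,\ldots,h$ this gives $n,n-1,\ldots,n-h+1$. As $i$ runs over $h+1,\ldots,n-1$ it gives $n-h-1,\ldots,1$. So together with $0$, the spectrum is $\{0,1,\ldots,n\}$ with only $n-h=\lceil n/2\rceil$ omitted. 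This matches the statement, and all $n$ values are distinct.

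The main obstacle is the first step: justifying the degree sequence cleanly for odd $n$ and matching the paper's convention that $C(2,1,\ldots,1)$ is connected. The induction through $A_n=K_1\vee(K_1\dot\cup A_{n-2})$ handles this; note that $h$ must come out as $\lfloor n/2\rfloor$ so that the omitted value is $\lceil n/2\rceil$. A useful cross-check is the Laplacian of a join: $L$-eigenvalues of $K_1\vee H$ on $n$ vertices are $0$, $n$, and $\mu+1$ for the nonzero eigenvalues $\mu$ of $L(H)$. Applying this to $H=K_1\dot\cup A_{n-2}$, whose eigenvalues are $0,0$ and those of $A_{n-2}$, gives the result by induction independently of Theorem \ref{thm:nsgchar}. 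The gap $\lceil (n-2)/2\rceil$ shifts to $\lceil n/2\rceil$, the new $0$ of $H$ becomes $1$, and $n$ is added.
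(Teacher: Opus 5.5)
Your proof is correct and follows essentially the route the paper indicates. The paper gives no proof of its own: it cites the lemma and remarks that the eigenvalues ``may be derived from the degree sequence'', i.e.\ via part (iv) of Theorem~\ref{thm:nsgchar}, which is exactly your conjugate computation for $\{1,\ldots,n-1\}\cup\{\lfloor n/2\rfloor\}$, and your recursion $A_n=K_1\vee(K_1\dot\cup A_{n-2})$ correctly supplies the odd-$n$ degree sequence that the paper only writes out for even $n$.

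One small slip in your cross-check: for $K_1\vee H$ the shifted eigenvalues are $\mu+1$ for all eigenvalues of $L(H)$ except a single copy of $0$, not just the nonzero ones. Likewise, $H=K_1\dot\cup A_{n-2}$ has spectrum $\{0\}$ together with that of $A_{n-2}$. This corrected form is in fact how you use it when the extra $0$ of $H$ becomes $1$.
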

 Since the multiplicity of each eigenvalue is 1,  each eigenspace is  generated by an eigenvector that is unique up to scalar multiples. The eigenspaces are given by the following theorem.

\begin{theorem}
    The orthogonal basis    $ \left( 
      {\bf x}_{0} ,
       {\bf x}_{1},
    {\bf x}_{2},
    {\bf x}_{3},
     \ldots, {\bf x}_{n-2}, {\bf x}_{n-1}
\right)$  for ${\mathbb R}^n$,  given by the $n$   column vectors of the $n\times n$ matrix 
  \begin{equation} 
    \left(
\begin{array}{cccccccc}
 1 & -1 & -1 & -1 & \ldots  & -1 & -1 & -1 \\
 1 & 1 & -1 &  -1& \ldots & -1 & -1 & -1 \\
 1 & 0 & 2  & -1&\ldots  & -1 & -1 & -1 \\
 1 & 0 & 0 & 3  & \ldots   & -1 & -1 & -1 \\
 \vdots &  \vdots  & \vdots  & \vdots & 
\ddots
   &\vdots  &   \vdots  &   \vdots\\
 1 & 0 &0 & 0 &\ldots  & n-3 &  -1 & -1 \\
 1 & 0 & 0 &  0 & \ldots& 0 & n-2 &  -1  \\
 1 & 0 & 0 & 0 &  \ldots  & 0 & 0 &  n-1 \\
\end{array}
\right)  \label{MatrixStBas}
\end{equation}
form the only eigenbasis for the Laplacian matrix of the antiregular graph on $n$ vertices, up to sign and with mutually coprime entries in each eigenvector, associated with eigenvalues $\mu_0, \mu_1, \ldots, \mu_{n-1}$ respectively.
\end{theorem}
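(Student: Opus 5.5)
The plan is a direct verification with the vertices labelled $v_0,v_1,\ldots,v_{n-1}$ in the order of the BCS, so that $v_k$ is added either as an isolated or as a dominating vertex with respect to $S_{k-1}=\{v_0,\ldots,v_{k-1}\}$. Write $S_k=\{v_0,\ldots,v_k\}$ and let $\mathbf{1}_{S}$ be the indicator vector of a vertex set $S$. The $k$th column of (\ref{MatrixStBas}) is then ${\bf x}_0=\mathbf{1}_{S_{n-1}}$ and, for $k\ge 1$,
\[
{\bf x}_k = k\,{\bf e}_k-\mathbf{1}_{S_{k-1}}=(k+1){\bf e}_k-\mathbf{1}_{S_k}.
\]
This vector is supported on $S_k$, has zero entry sum, and is constant on $S_{k-1}$.

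\emph{Orthogonality.} For $j<k$, the support of ${\bf x}_j$ lies in $S_j\subseteq S_{k-1}$, where ${\bf x}_k\equiv -1$. Hence ${\bf x}_j\cdot{\bf x}_k=-\sum_i ({\bf x}_j)_i=0$, and ${\bf x}_0$ is orthogonal to every ${\bf x}_k$ because each ${\bf x}_k$ has zero sum. So the $n$ columns are mutually orthogonal and nonzero, and therefore form a basis of ${\mathbb R}^n$.

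\emph{Eigenvector property.} ${\bf x}_0$ is a Laplacian eigenvector for eigenvalue $0$. Fix $k\ge1$ and compute $(L{\bf x}_k)_u$.
\begin{enumerate}[(a)]
\item If $u\notin S_k$, then $u=v_\ell$ with $\ell>k$. By the construction, $v_\ell$ is adjacent either to all of $S_k$ or to none of it, so $(L{\bf x}_k)_u=-\sum_{w\sim u}({\bf x}_k)_w$ equals either $0$ or minus the full entry sum, which is $0$.
\item If $u\in S_k$, split the degree of $u$ into its degree inside $G[S_k]$ and its number $d_k$ of neighbours among $v_{k+1},\ldots,v_{n-1}$. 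Since later vertices are isolated or dominating when added, $d_k$ is the number of dominating vertices among $v_{k+1},\ldots,v_{n-1}$, and this is the same for every $u\in S_k$. Hence
\[
(L{\bf x}_k)_u = d_k({\bf x}_k)_u+(L_{G[S_k]}{\bf x}_k)_u .
\]
In $G[S_k]$, the vertex $v_k$ is isolated or dominating. We have $L_{G[S_k]}\mathbf{1}_{S_k}=0$. Moreover $L_{G[S_k]}{\bf e}_k=0$ in the isolated case and $L_{G[S_k]}{\bf e}_k=(k+1){\bf e}_k-\mathbf{1}_{S_k}={\bf x}_k$ in the dominating case. Therefore $L_{G[S_k]}{\bf x}_k=c_k{\bf x}_k$ with $c_k\in\{0,k+1\}$.
\end{enumerate}
Combining (a) and (b) gives $L{\bf x}_k=(c_k+d_k){\bf x}_k$. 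This argument in fact works for every threshold graph on $n$ vertices, which is the source of the common eigenbasis. One should also check the twin start $b_1=b_2$: then ${\bf x}_1={\bf e}_1-{\bf e}_0$, which fits the same computation with $c_1\in\{0,2\}$.

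\emph{Uniqueness for the antiregular graph.} By the preceding Lemma, the $n$ Laplacian eigenvalues of $C(2,1,\ldots,1)$ are distinct. Each eigenspace is therefore one-dimensional, so any eigenbasis consists of scalar multiples of the ${\bf x}_k$. Requiring integer entries with greatest common divisor $1$ fixes each vector up to sign, since the entries of ${\bf x}_k$ are $-1,k,0$ and so already have gcd $1$. Matching ${\bf x}_k$ to $\mu_k=c_k+d_k$ is a bookkeeping step that can be checked against the list in the Lemma.

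The main obstacle I anticipate is the labelling convention rather than the algebra. The matrix (\ref{MatrixStBas}) must be read in the BCS order, with $v_0,v_1$ the initial twin pair, and not in the non-increasing degree order used for the FYD. I would state this ordering explicitly at the outset.
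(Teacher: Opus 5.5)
Your proof is correct. For the eigenvector part it follows a genuinely different route from the paper.

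The paper gives no separate argument at this point. The uniqueness claim rests on the remark preceding the statement: the Lemma makes every eigenvalue simple, so each eigenspace is a line. That is exactly your last step. The fact that the columns of (\ref{MatrixStBas}) are eigenvectors is established only in the converse half of the proof of Theorem~\ref{eigenbasis}, by induction on $k$. There, the eigen-equation for $\mathbf{x}_k$ on its first $k$ rows is carried forward to evaluate $\mathrm{Lap}(G)\mathbf{x}_{k+1}$. A four-case split on whether the vertices labelled $k+1$ and $k+2$ lie in cliques or cocliques then shows that the resulting bracketed expressions coincide.

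You instead check each $\mathbf{x}_k$ on its own, using two facts. Every later vertex sees $S_k$ all-or-nothing, so the zero entry sum kills the rows outside $S_k$. On $S_k$ one has $L\mathbf{x}_k=d_k\mathbf{x}_k+L_{G[S_k]}\mathbf{x}_k$, with $v_k$ isolated or dominating in $G[S_k]$. This approach has several advantages:
\begin{itemize}
\item It avoids both the induction and the case analysis.
\item It yields the closed form $\mu_k=c_k+d_k$. This is the degree of $v_k$ plus one exactly when $v_k$ was added as a dominating vertex, i.e.\ the paper's $\mu_k=\rho_{k+1}+a_{1,k+1}$, and hence its Corollary.
\item It supplies the orthogonality that the paper only asserts.
\item As you observe, it works verbatim for every threshold graph in BCS order, so it also gives the ``if'' direction of Theorem~\ref{eigenbasis} in a few lines.
\end{itemize}
The paper's entry-by-entry induction, for its part, mirrors its proof of the ``only if'' direction. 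There the same rows of $\mathrm{Lap}(G)\mathbf{x}_k$ are read backwards to force the block structure of the adjacency matrix. Incidentally, your formula would also let you verify directly that the $\mu_k$ are distinct for $C(2,1,\ldots,1)$, rather than citing the Lemma. Relying on the Lemma, as the paper does, is of course fine.
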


We note that ${\bf x}_{0} = \bf{j}$, the all-one vector (which is a kernel eigenvector of the Laplacian for any graph), while for $i = 1, \ldots, n-1$, the last nonzero entry of the eigenvector ${\bf x}_{i}$ is $i$.

\begin{definition}
    The set of $n$  column vectors of the matrix (\ref{MatrixStBas}) is termed  the {\em standard orthogonal Laplacian eigenbasis} for a graph on $n$ vertices.
 
\end{definition}

Note that if the eigenvalues are integers, then there exist eigenvectors with integer entries. We shall show in Theorem \ref{eigenbasis} that a remarkable property of threshold graphs is that they \textbf{all} share the same Laplacian eigenvectors, that is, the standard Laplacian eigenbasis,  for an appropriate labelling. This result is also proved in a recently published paper \cite{MachareteDelVecchio} (2024), where the authors show that one of the eigenbases of the star graph $K_{1, n-1}$ is also an eigenbasis for all threshold graphs on $n$ vertices. The proof we now give is different and more direct.

  \begin{theorem}
  The elements of the standard Laplacian orthogonal eigenbasis for ${\mathbb R}^n$ are an eigenbasis for the Laplacian matrix of a graph $G$ on $n$ vertices  for some labelling  of the vertex set if and only if  
   $G$ is a threshold graph. 
   \label{eigenbasis}
  \end{theorem}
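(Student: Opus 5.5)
The plan is to work directly with the explicit columns of (\ref{MatrixStBas}). Index the vertices $1,\ldots,n$. For $1\le k\le n-1$ the $k$th column is
\[
{\bf x}_k = k\,{\bf e}_{k+1}-\sum_{i=1}^{k}{\bf e}_i ,
\]
and ${\bf x}_0={\bf j}$ lies in the kernel of every Laplacian. So it suffices to decide when each ${\bf x}_k$ with $k\ge 1$ is an eigenvector of $L(G)$. The key observation is that ${\bf x}_k$ is supported on $\{1,\ldots,k+1\}$ and its entries sum to zero there. Hence for a vertex $j>k+1$,
\[
(L{\bf x}_k)_j=-\Big(k\,a_{j,k+1}-\sum_{i\le k}a_{ji}\Big),
\]
which vanishes whenever $j$ is adjacent to all of $\{1,\ldots,k+1\}$ or to none of them.

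\emph{Sufficiency.} Label the threshold graph according to its BCS, so vertex $k+1$ is added either as a dominating vertex or as an isolated vertex relative to $\{1,\ldots,k\}$. Every later vertex is then adjacent to all or none of $\{1,\ldots,k+1\}$, so $(L{\bf x}_k)_j=0$ for $j>k+1$. Let $t$ be the number of later vertices that are dominating; each vertex in $\{1,\ldots,k+1\}$ has exactly $t$ neighbours outside this set. For $i\le k$, write $d_i$ for the degree and $c_i$ for the number of neighbours of $i$ in $\{1,\ldots,k\}$.
\begin{enumerate}[(i)]
\item If $b_{k+1}=1$, then $d_i=c_i+1+t$ and $(L{\bf x}_k)_i=-d_i-k+c_i=-(k+1+t)$. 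Also $d_{k+1}=k+t$, so $(L{\bf x}_k)_{k+1}=k(k+t)+k=k(k+1+t)$. Thus ${\bf x}_k$ is an eigenvector with eigenvalue $k+1+t$.
\item If $b_{k+1}=0$, then $d_i=c_i+t$ and $(L{\bf x}_k)_i=-d_i+c_i=-t$. Also $d_{k+1}=t$, so $(L{\bf x}_k)_{k+1}=kt$. Thus the eigenvalue is $t$.
\end{enumerate}
The only point needing care is the start, since $b_1=b_2$: vertex $2$ is either adjacent to vertex $1$ or not, and both cases fit the same computation with $k=1$. As a check, these eigenvalues should reproduce the lengths $\rho^\star_i$ of Theorem \ref{thm:nsgchar}(iv), but the argument does not need this.

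\emph{Necessity.} Suppose that for some labelling every ${\bf x}_k$ is an eigenvector of $L(G)$. Fix $j\ge 3$ and take $k=j-2$, so that $j>k+1$. Since ${\bf x}_{j-2}$ is an eigenvector and $({\bf x}_{j-2})_j=0$, we must have $(L{\bf x}_{j-2})_j=0$, that is,
\[
(j-2)\,a_{j,j-1}=\sum_{i\le j-2}a_{ji}.
\]
The right-hand side is a sum of $j-2$ entries from $\{0,1\}$. If $a_{j,j-1}=1$, every $a_{ji}$ with $i\le j-2$ equals $1$; if $a_{j,j-1}=0$, they all equal $0$. Hence each vertex $j\ge 3$ is adjacent to all or to none of its predecessors. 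Together with the arbitrary adjacency between vertices $1$ and $2$, this is exactly the BCS construction with $b_1=b_2$, so $G$ is a threshold graph.

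I expect the main obstacle to be bookkeeping rather than mathematics: aligning the paper's BCS labelling and cell order with the column order in (\ref{MatrixStBas}), and handling the convention $b_1=b_2$. A secondary point is the phrase "eigenbasis" when eigenvalues repeat. In the sufficiency direction we show each basis vector is an eigenvector, and since the vectors are orthogonal, they form an eigenbasis regardless of multiplicities; the necessity direction uses only that each ${\bf x}_k$ is an eigenvector.
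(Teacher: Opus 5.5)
Your proof is correct, and it is more direct than the paper's. Both arguments are driven by the same equations, namely the entries of $L{\bf x}_k$ in positions $j>k+1$. The paper, however, runs an induction on $k$ in both directions.

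\textbf{Necessity.} The paper carries $a_{1,i}=\cdots=a_{k,i}$ forward using every entry $i>k+1$ of $\mathrm{Lap}(G)\,{\bf x}_k$. You avoid induction: your $0/1$ extremality argument shows that the $n-2$ scalar conditions $(L{\bf x}_{j-2})_j=0$, $3\le j\le n$, already force $G$ to be threshold. The vector ${\bf x}_{n-1}$ and the leading entries are never used.

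\textbf{Sufficiency.} The paper derives the entries of $\mathrm{Lap}(G)\,{\bf x}_{k+1}$ from the eigen-equation for ${\bf x}_k$. It then closes with a four-case analysis on $(a_{1,k+1},a_{1,k+2})$, using relations such as $\rho_{k+2}=\rho_{k+1}\pm k$. Your computation with the single parameter $t$ needs only two cases. It gives the eigenvalue $(k+1)b_{k+1}+t$ in closed form. This equals the paper's $\rho_{k+1}+a_{1,k+1}$, since $\rho_{k+1}=k\,b_{k+1}+t$, so the Corollary follows just as easily from your version.

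\textbf{Trade-off.} The paper's propagation step $k\,a_{1,i}=k\,a_{k+1,i}$ is purely linear and never uses $a_{ij}\in\{0,1\}$, so it would survive for weighted Laplacians. Your necessity step genuinely depends on the entries being $0$ or $1$. On the other hand, your direct formulas keep the indexing clean. They avoid the $\mu_k$ versus $\mu_{k+1}$ and ${\bf x}_k$ versus ${\bf x}_{k+1}$ slippage that makes the paper's write-up hard to follow.
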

\begin{proof}
   
    The Laplacian of a general graph $G$ is given by
    \[
    {\rm  Lap}(G)= \left(
\begin{array}{ccccccc}
 \rho_1 & -a_{1,2}& -a_{1,3} &\ldots  & -a_{1,k} & \ldots &-a_{1,n} \\
 -a_{1,2} &\rho_2 & -a_{2,3} &\ldots  & -a_{2,k} &  \ldots & -a_{2,n} \\
-a_{1,3} & -a_{2,3}&\rho_3 &\ldots  &-a_{3,k}&  \ldots & -a_{3,n} \\
 \vdots &  \vdots  &  \vdots & \ddots &  \vdots  &\vdots  &   \vdots  \\
 -a_{1,k} & -a_{2,k} & -a_{3,k} & \ldots &\rho_k &\ldots &  -a_{k,n}  \\
 \vdots &  \vdots &  \vdots  & \vdots&  \vdots &\ddots &  \vdots  \\
-a_{1,n}& -a_{2,n} &-a_{3,n}&\ldots  &-a_{k,n} &\vdots &  \rho_n \\
\end{array}
\right),
\]
where $a_{i,j}$ is the corresponding entry of the adjacency matrix, and $\rho_i$ is the degree of vertex $i$. We prove the theorem by induction on $i$.

Suppose that the Laplacian standard orthogonal eigenbasis
is a set of eigenvectors for the Laplacian matrix of a graph $G$ on $n$ vertices.

First, we note that ${\rm  Lap}(G). {\bf x}_0=0$, so $\mu_0 = 0.$

Now we shall consider ${\rm  Lap}(G). {\bf x}_i$ for $1 \leq i  \leq n-1$, and by induction on $i$, we  show Prop($i$) given by the following statements:

for $1\leq i\leq n-1,$ 
$a_{1,i} =a_{2,i} =\ldots
=a_{i-1,i}$, and this common value is $0$ (respectively, $1$) iff the vertex $i$ is in a coclique (respectively, a clique).

This condition is equivalent to the statement that the adjacency matrix of $G$ has the block form given in Theorem \ref{NSGBlockAdjacency}, and so it is equivalent to proving that $G$ is a threshold graph.

Consider ${\rm  Lap}(G). {\bf x}_1.$  From the first two entries, we have:
$\rho_1 + a_{12}= \rho_2 + a_{12}=\mu_2$ and so $\rho_1=\rho_2$, and $\mu_2 = \rho_1$ or $\rho_1 + 1$, if $a_{1,2} =0$ or $a_{1,2} =1$, respectively.

 From the other entries, we deduce that  $a_{1i}=a_{2i}$ for $3\leq i  \leq n$, so vertices 1 and 2 are twins (i.e.~duplicate for $\mu_1 = \rho_1$ or co-duplicate for $\mu_1 = \rho_1 +1$). This verifies Prop(1).

Now suppose that, the inductive hypothesis Prop($k$) is true for some $1 \leqslant k \leqslant n-1$, i.e.~
for $2 \leqslant j \leqslant k$, $a_{1,i} =a_{2,i} =\ldots
=a_{j,i}$ for $j+ 1 \leqslant i \leqslant n$. 
 In particular, for $j=k$ and $i=k+1$, we have $a_{1,k+1} = a_{2,k+1} = \ldots = a_{k,k+1}$. We show that Prop($k+1$) is true.

The $(k+1)$th entry of ${\rm  Lap}(G). {\bf x}_{k+1}.$ satisfies
\[
 a_{1,k+1} + a_{2,k+1} +\ldots + a_{k,k+1} + k\rho_{k+1} = k \mu_{k+1}.
\]
From the inductive hypothesis, the sum of the first $k$ terms on the LHS is $k$ or 0, so we deduce 
$\mu_{k+1}$ has the value $\rho_{k+1}$ or $\rho_{k+1} + 1$.

Consider entries $k+2, \ldots, n$ of ${\rm  Lap}(G). {\bf x}_{k+1}$.
We find that 
\[
a_{1,i} + a_{2,i} + \ldots + a_{{k},i} -  k a_{k+1, i} =0.
\]
By the inductive hypothesis, the first $k$ terms are equal, and so $a_{1,i} =a_{2,i}=\ldots
=a_{k+1,i}$, and so Prop($k+1$) is true. The result follows  by induction on $i$.
\footnote{To help the reader understand the argument better, we write out the details for $k= 2$ and $3$:

Consider ${\rm  Lap}(G). {\bf x}_3.$  From the third entry,  since $a_{13}=a_{23}$, we obtain 
$-2 \mu+3 = -2\rho_3 -a_{1,3} -a_{2,3}= -2\rho_3 - 2 a_{1,3} $. This implies that $\mu_3= \rho_3$  or $\rho_3+1$.

From the entries $4, \ldots, n$, we find that $a_{1i} + a_{2i} - 2a_{3i}=0$, and so $ a_{1i} =a_{2i} = a_{3i}$ for $4 \leq i  \leq n$.

Consider ${\rm  Lap}(G). {\bf x}_4.$  
From the fourth entry, we obtain
$-3 \mu_4 = -3\rho_4 -a_{1,4} -a_{2,4} -a_{3,4}=   -3\rho_4 - 3 a_{1,4}$, so $\mu_4= \rho_4$  or $\rho_4+1$.

As before, we also obtain $ a_{1i} =a_{2i} = a_{3i} = a_{4i}$ for $5 \leq i  \leq n$.}

We have shown that:

 Using Theorem \ref{NSGBlockAdjacency}, we have therefore proved that the graph is a threshold graph.

Conversely, suppose that $G$ is a threshold graph, labelled as in Theorem \ref{NSGBlockAdjacency}. We show that the elements of the standard Laplacian orthogonal eigenbasis are eigenvectors
of the Laplacian of $G$, where the eigenvector ${\bf x}_{i}$ corresponds to some eigenvalue $\mu_i$, for $i = 0, 1, \ldots, n-1$. Clearly,  ${\bf x}_0$ is an eigenvector of the Laplacian matrix for the eigenvalue $\mu_0 =0$.

By construction, $\rho_1 = \rho_2$,
and so the first 2 entries of ${\rm  Lap}(G). {\bf x}_{1}$ are  $-(\rho_1 +a_{1,2})$ and $\rho_1 + a_{1,2}$.
 Furthermore, since $a_{1,i}=a_{2,i}$ for $3\leq i  \leq n$,  the entries of ${\rm  Lap}(G). {\bf x}_{1}$ for these rows are all zero. This implies that ${\bf x}_1$ is an eigenvector of Lap($G$) as required, and the corresponding eigenvalue $\mu_1$ is $\rho_1$ or $\rho_1 +1$ depending on whether $a_{1,2}=0$ or $1$, respectively. This is the base step for the proof by induction.

Now suppose that ${\bf x}_k$ (for $1 \leqslant k \leqslant n-2$) is an eigenvector of Lap($G$) for the eigenvalue $\mu_k$. This is the inductive hypothesis.
By considering the first $k$ entries of this vector, we find that
\begin{align*}
  &  - \rho_1 + a_{1, 2} + a_{1, 3}  + \ldots + a_{1,k} -   k \, a_{1,k+1}
\\ & =  a_{2,1} -  \rho_2 +  a_{2, 3}  + \ldots + a_{2,k} - k \, a_{2,k+1}
\\ & = 
\ldots \\
 &  = a_{k,1} + a_{k, 2} + \ldots + a_{k, k-1}  - \rho_{k} - k \, a_{k,k+1}
 \\ & = - \mu_k,
\end{align*} 
 which we can express in matrix form as follows:
\begin{equation}
\begin{pmatrix}
- \rho_1 + a_{1, 2} + a_{1, 3}  + \ldots + a_{1,k} \\
 a_{2,1} -  \rho_2 +  a_{2, 3}  + \ldots + a_{2,k} \\
\vdots \\
a_{k,1} + a_{k, 2} + \ldots + a_{k, k-1}  - \rho_{k}
    \end{pmatrix} =
    \begin{pmatrix}
    k \, a_{1,k+1} - \mu_k \\
    k \, a_{1,k+1} - \mu_k \\
    \vdots \\
    k \, a_{1,k+1} - \mu_k
    \end{pmatrix}
    \label{eigvectormatrixeqn}
\end{equation}
 where we use the fact that in a threshold graph of order $n$,
\[
a_{i,1} =a_{i,2} =\ldots
=a_{i,i-1} \text{ for } 1\leq i\leq n  \qquad \text{ (A) }
\]
for $1\leq i\leq n$, and hence, by symmetry, $a_{1, k+1} =a_{2, k+1} =\ldots
=a_{k, k+1}$.

The $k+1$ entry is given by
\[
 a_{k+1,1} + a_{k+1, 2} + \ldots + a_{k+1, k} + k \rho_{k+1} = k \, \mu_k,
\] 
which gives
$ k \, a_{k+1,1} + k \rho_{k+1} = k \, \mu_k$, 
where we again use property (A). 
Therefore 
$\mu_k = \rho_{k+1} + a_{1,k+1}$. 
We note that this means that $\mu_k = \rho_{k+1}$ if the vertex labelled $k+1$ is a coclique vertex, while $\mu_k = \rho_{k+1} +1$ if this vertex is a clique vertex.

Let us now consider ${\rm  Lap}(G). {\bf x}_{k+1}.$ The first $k$ entries of this vector are given by
\begin{align*}
- \rho_1 + a_{1, 2} + a_{1, 3} + & \ldots + a_{1,k} + a_{1,k+1} - (k+1) \, a_{1,k+2},
\\
a_{2,1} -  \rho_2 +  a_{2, 3}  + & \ldots + a_{2,k} + a_{2, k+1} - (k +1) \, a_{2,k+2},
\\
&\vdots \nonumber \\
a_{k,1} + a_{k, 2} + a_{k, 3} + & \ldots 
- \rho_{k}  + a_{k, k+1} - (k+1) \, a_{k,k+2}.
\end{align*}
Using equation (\ref{eigvectormatrixeqn}) from the inductive hypothesis and property (A), we find that these entries are all equal to 
\begin{equation}
-\left[\mu_k -  (k+1) \, a_{1,k+1} + (k+1) \, a_{1,k+2} \right].
 \label{firstkentries}
\end{equation}
The $(k+1)$th entry is given by
\[
a_{k+1,1} + a_{k+1, 2} + a_{k+1, 3}  \ldots + a_{k+1, k}   - \rho_{k+1} - (k+1) \, a_{k+1,k+2} 
\]
which, using property (A), may be expressed as 
\begin{equation}
 - \left[\rho_{k+1}  - k \, a_{1,k+1} + (k+1) \, a_{1,k+2}\right].
 \label{k+1entry}
\end{equation}

The $(k+2)$th entry is
\[
 a_{k+2,1} + a_{k+2, 2} + a_{k+2, 3}  \ldots + a_{k+2, k+1} +  (k+1) \rho_{k+2}
\]
which reduces to
\begin{equation}
  (k+1) \left[\rho_{k+2} + a_{1,k+2}
 \right].
 \label{k+2entry}
\end{equation}
    Property (A) implies that the entries of the vector ${\rm  Lap}(G). {\bf x}_{k+1}$ in rows $k+3$ up to $n$ are zero. Therefore,  what remains is to show that the expressions given in the square brackets in (\ref{firstkentries}), (\ref{k+1entry}) and (\ref{k+2entry}) are equal.

There are four cases to consider:

Case 1. $a_{1,k+1}= a_{1,k+2} = 0$. This means that the vertices $k+1$ and $k+2$ are in the same coclique, and therefore have the same degree. Furthermore, we can also deduce that $\mu_k = \rho_{k+1}$, as explained above.
This gives the required equalities.

Case 2. $a_{1,k+1} = 0$ and $a_{1,k+2} = 1$. This means that the vertex $k+1$ is in a coclique and the vertex $k+2$ is in the following clique. This implies that $\rho_{k+2} = \rho_{k+1} + k$. We again have that $\mu_k = \rho_{k+1}$, and the required equalities follow.

Case 3. $a_{1,k+1} = 1$ and $a_{1,k+2} = 0$. This means that the vertex $k+1$ is in a clique and the vertex $k+2$ is in the following coclique. This implies that $\rho_{k+2} = \rho_{k+1} - k$, as well as that $\mu_k = \rho_{k+1} +1$. Once again, this implies the necessary equalities.

Case 4. $a_{1,k+1}= a_{1,k+2} = 1$. This means that the vertices $k+1$ and $k+2$ are in the same clique, and therefore have the same degree. Also, $\mu_k = \rho_{k+1} +1$. From this, we obtain the required equalities. 

We have thus shown that, whenever ${\bf x}_{k}$ is an eigenvector of Lap($G$), then ${\bf x}_{k+1}$ is also an eigenvector of this matrix. In all cases, we find that $\mu_{k+1} = \rho_{k+2}$ or $\rho_{k+2}  +1$. This completes the proof.
\end{proof}

The proof of Theorem \ref{eigenbasis} implies the following known result.

\begin{corollary}
The nonzero Laplacian eigenvalues of a threshold graph are either (i) $\rho_{i}$ if the vertex $i$ is in a coclique, or (ii) $\rho_{i} +1$ if the  vertex $i$ is in a clique.
The multiplicities are given by the number of vertices in the corresponding cell, except for the eigenvalue that arises from the degree of the vertices of the first cell, where the  multiplicity is one less than the size of the cell.
\end{corollary}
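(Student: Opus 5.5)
We read the Corollary off the converse half of the proof of Theorem \ref{eigenbasis}. Label the threshold graph $G$ according to its BCS, so that its adjacency matrix has the block form of Theorem \ref{NSGBlockAdjacency}. By that theorem, the standard orthogonal Laplacian eigenbasis ${\bf x}_0,{\bf x}_1,\ldots,{\bf x}_{n-1}$ is a complete set of eigenvectors of ${\rm Lap}(G)$. Since these $n$ vectors are linearly independent, the spectrum of ${\rm Lap}(G)$, counted with multiplicity, is exactly the list $\mu_0,\mu_1,\ldots,\mu_{n-1}$ of their eigenvalues. The task is therefore to identify each $\mu_k$ and then count repetitions.

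The identification comes from the inductive step. There, the $(k+1)$th entry of ${\rm Lap}(G).{\bf x}_k$, combined with property (A), gives
\[
\mu_k=\rho_{k+1}+a_{1,k+1}, \qquad 1\le k\le n-1 .
\]
Here $a_{1,k+1}=0$ if vertex $k+1$ lies in a coclique and $a_{1,k+1}=1$ if it lies in a clique. Hence $\mu_k=\rho_{k+1}$ in the coclique case and $\mu_k=\rho_{k+1}+1$ in the clique case, which is parts (i) and (ii). Together with $\mu_0=0$ coming from ${\bf x}_0={\bf j}$, the vertices $2,3,\ldots,n$ index the nonzero part of the spectrum, each vertex contributing one eigenvalue. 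Vertex $1$ contributes none of its own, because its slot ${\bf x}_0$ carries the eigenvalue $0$. We should also check that every $\mu_k$ with $k\ge1$ is nonzero. For $k=1$ this is $\mu_1=\rho_2+a_{1,2}$, which is positive when $G$ has edges. For $k\ge 2$ we need a little more care; in a connected threshold graph every degree is positive, so $\mu_k\ge\rho_{k+1}>0$.

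For the multiplicities, all vertices in a cell $A_j$ have the same degree and the same clique/coclique type. So the cell $A_j$ contributes its common value $\rho+a$ exactly once for each of its vertices among $2,\ldots,n$. That is $a_j$ times for $j\ge2$ and $a_1-1$ times for $j=1$, since vertex $1$ lies in $A_1$.

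The main obstacle is showing that eigenvalues coming from different cells are distinct, so that these counts are the true multiplicities rather than lower bounds. We would prove that the values $\rho_i+a_{1,i}$ are pairwise different across cells using the degree recurrences from Cases 2 and 3. Passing from a coclique to the following clique at vertex $k+2$ gives $\rho_{k+2}=\rho_{k+1}+k$, so $\mu$ increases by $k+1$. Passing from a clique to a coclique gives $\rho_{k+2}=\rho_{k+1}-k$, so $\mu$ decreases by $k+1$. The eigenvalues attached to clique cells therefore form one increasing sequence and those attached to coclique cells another. To separate the two families we would use the nesting in Theorem \ref{TheoNested}, or equivalently the conjugate-degree description in Theorem \ref{thm:nsgchar}(iv). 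There the clique values are the column lengths over the Durfee square and the coclique values are the column lengths beyond it, and the Ferrers–Young shape in Figure \ref{fig:fydshape} makes these two sets disjoint. If that separation argument fails, we fall back on stating multiplicities as counts of the list $\mu_1,\ldots,\mu_{n-1}$, which is literally what the proof provides.
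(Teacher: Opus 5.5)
Your proposal is the paper's argument. The paper just reads $\mu_k=\rho_{k+1}+a_{1,k+1}$ off the converse half of the proof of Theorem~\ref{eigenbasis} and counts one eigenvalue per vertex $2,\ldots,n$, without checking that different cells give different values, so your distinctness step is an addition the paper omits. You can finish that step without the Ferrers--Young picture, whose clique/column correspondence the paper only asserts, and so need no fallback: your jump rules give cell values with $v_{j+1}=v_j\pm(a_1+\cdots+a_j)$, with sign $+$ exactly when $A_{j+1}$ is a clique, so clique values strictly increase and coclique values strictly decrease along the cells, and since the jump between the adjacent cells $A_1$ and $A_2$ puts the clique value above the coclique one, every clique value exceeds every coclique value.
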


A well-known result from the theory of matrices is that two matrices share a basis of eigenvectors if and only if they commute \cite{StrangLA}. Hence Theorem \ref{eigenbasis} implies the following:

\begin{theorem}
For any $n \in \mathbb{Z}^+$, the space of the Laplacian matrices of threshold graphs of order $n$ is a commutative algebra.
\end{theorem}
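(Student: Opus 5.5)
The plan is to derive this from Theorem \ref{eigenbasis} via simultaneous diagonalization. The statement should be read with some care. The set of Laplacians of threshold graphs of order $n$ is not closed under addition as a set of graphs. The natural object is therefore the linear span (equivalently, the algebra generated) of the matrices ${\rm Lap}(G)$, where $G$ ranges over threshold graphs on $n$ vertices, each labelled as in Theorem \ref{NSGBlockAdjacency}. The claim to prove is that this algebra is commutative.

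\textbf{Step 1: a common diagonalizing matrix.} Let $X$ be the $n\times n$ matrix (\ref{MatrixStBas}) whose columns are the standard orthogonal Laplacian eigenbasis ${\bf x}_0,\ldots,{\bf x}_{n-1}$. Its columns are mutually orthogonal and nonzero, so $X$ is invertible. Normalizing the columns gives an orthogonal matrix $Q = X\,{\rm diag}(\|{\bf x}_i\|^{-1})$. By Theorem \ref{eigenbasis}, every ${\rm Lap}(G)$ with $G$ threshold on $n$ vertices (in its BCS labelling) satisfies
\[
{\rm Lap}(G) = Q\,\Lambda_G\,Q^{T},
\]
where $\Lambda_G={\rm diag}(\mu_0,\ldots,\mu_{n-1})$ lists the eigenvalues attached to the ${\bf x}_i$. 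The crucial point is that $Q$ does not depend on $G$; this is exactly what Theorem \ref{eigenbasis} provides.

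\textbf{Step 2: commutativity and closure.} Let ${\mathcal D}_Q=\{Q\Lambda Q^T : \Lambda \text{ diagonal}\}$. This set is closed under addition, scalar multiplication and matrix product, since $Q\Lambda Q^TQ\Lambda' Q^T=Q\Lambda\Lambda' Q^T$. It is therefore a subalgebra of $M_n(\mathbb R)$, and it is isomorphic to the algebra of diagonal matrices, which is commutative. All Laplacians of threshold graphs of order $n$ lie in ${\mathcal D}_Q$, so their span and the algebra they generate also lie in ${\mathcal D}_Q$. Subalgebras of commutative algebras are commutative, which proves the claim. One may also note directly that ${\rm Lap}(G){\rm Lap}(H)=Q\Lambda_G\Lambda_HQ^T={\rm Lap}(H){\rm Lap}(G)$.

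\textbf{Main obstacle: the labelling.} The only delicate point is that commutation holds for the BCS labelling, not for arbitrary labellings. I would state explicitly that each threshold graph is taken with the vertex order of Theorem \ref{NSGBlockAdjacency}, the order used in Theorem \ref{eigenbasis}. Under that convention the common eigenbasis $X$ is valid for every such $G$. It is then worth remarking that ${\mathcal D}_Q$ has dimension $n$. So the generated algebra is at most $n$-dimensional, which is consistent with the cited fact from \cite{StrangLA} on commuting diagonalizable matrices.
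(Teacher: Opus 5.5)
Your argument is correct and is essentially the paper's. The paper simply combines Theorem~\ref{eigenbasis} with the fact that matrices sharing an eigenbasis commute; your factorization $\mathrm{Lap}(G)=Q\Lambda_G Q^{T}$ with $Q$ independent of $G$, together with your explicit insistence on the BCS labelling, makes that one-line deduction precise.

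The one unsupported aside is the parenthetical ``equivalently'', which identifies the span with the generated algebra. It is true: the threshold graphs consisting of a star on $\{1,\dots,j\}$ centred at $j$ plus isolated vertices, $2\le j\le n$, have $(\mu_1,\dots,\mu_{n-1})=(1,\dots,1,j,0,\dots,0)$ with $j$ in position $j-1$. Their Laplacians therefore already span the product-closed set $\{Q\Lambda Q^{T}:\lambda_0=0\}$. Nothing in your commutativity argument depends on it.
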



\begin{thebibliography}{12}
	\expandafter\ifx\csname natexlab\endcsname\relax\def\natexlab#1{#1}\fi
	\providecommand{\url}[1]{\texttt{#1}}
	\providecommand{\href}[2]{#2}
	\providecommand{\path}[1]{#1}
	\providecommand{\DOIprefix}{doi:}
	\providecommand{\ArXivprefix}{arXiv:}
	\providecommand{\URLprefix}{URL: }
	\providecommand{\Pubmedprefix}{pmid:}
	\providecommand{\doi}[1]{\href{http://dx.doi.org/#1}{\path{#1}}}
	\providecommand{\Pubmed}[1]{\href{pmid:#1}{\path{#1}}}
	\providecommand{\bibinfo}[2]{#2}
	\ifx\xfnm\relax \def\xfnm[#1]{\unskip,\space#1}\fi





\bibitem{BaiLapEigs2011} H.~Bai,  The Grone-Merris conjecture, \emph{Trans. Amer. Math. Soc. }, 4463–4474.
MR2792996 ↑2.

\bibitem{GraphClasses}
A. Brandst\"adt, V.~B. Le and J.~P. Spinrad, {\it Graph classes: a survey}, SIAM Monographs on Discrete Mathematics and Applications, SIAM, Philadelphia, PA, 1999.



\bibitem{ChvatalHammer}V. Chv\'atal and P.~L. Hammer, Aggregation of inequalities in integer programming, in {\it Studies in integer programming (Proc. Workshop, Bonn, 1975)}, pp. 145--162, Ann. Discrete Math., Vol. 1, North-Holland, Amsterdam-New York-Oxford.

\bibitem{Corneil1}D.~G. Corneil, H. Lerchs and L.~S. Burlingham, Complement reducible graphs, \emph{Discrete Appl. Math.} {\bf 3} (1981), no.~3, 163--174.


\bibitem{GroneMerris}R. Grone and R. Merris, The Laplacian spectrum of a graph II, SIAM Journal on
Discrete Mathematics 7 (1994), 221-229.


\bibitem{HammerKelmans}P. Hammer and A. Kelmans, Laplacian spectra and spanning trees in threshold
graphs, Discrete Applied Mathematics 65 (1996), 255-273.

\bibitem{HararySchwenk} F.~Harary and A.~J.~Schwenk, “Which graphs have integral spectra?” in \emph{Graphs and Combinatorics: Proceedings of the Capital Conference on Graph Theory and Combinatorics at the George Washington University June 18–22, 1973}, R. A. Bari and F. Harary, Eds., vol. 406 of Lecture Notes in Mathematics, pp.45–51, Springer, Berlin, Germany, 1974.

\bibitem{Jung}H.~A.~Jung, On a class of posets and the corresponding comparability graphs, \emph{J. Combinatorial Theory}, Ser. B {\bf 24} (1978), no.~2, 125--133.

\bibitem{Kirkland07}S. Kirkland, Constructably Laplacian integral graphs, \emph{Linear Algebra and Its Applications}, vol. 423, no. 1, pp. 3–21, 2007.

\bibitem{Kirk2009} Near Threshold Graphs
Steve Kirkland. ISSN: 1077-8926
The Electronic Journal of Combinatorics (E-JC)  Volume 16, Issue 1 (2009)

\bibitem{Lerchs}Lerchs, H., \emph{On cliques and kernel}s, Tech. Report, Dept. of Comp. Sci., Univ. of Toronto (1971).

\bibitem{MachareteDelVecchio}R.~Macharete, R.~Del-Vecchio, H.~Teixeira, and L.~de Lima. A Laplacian eigenbasis for threshold graphs, \emph{Special Matrices}, vol. 12, no. 1, 2024, pp. 20240029. https://doi.org/10.1515/spma-2024-0029





\bibitem{Mahadev}  N.V.R. Mahadev and U.N. Peled, \emph{Threshold Graphs and Related Topics}, Annals of Discrete
Mathematics, (1995). Elsevier,  M09 13, ISBN: 978-0-444-89287-4.

\bibitem{Merris1994} R.~Merris, Degree maximal graphs are Laplacian integral. \emph{Linear Algebra and its Applications}, \textbf{199} (1994), 381--389.

https://doi.org/10.1016/0024-3795(94)90361-1

\bibitem{Merris1998} R.~Merris, Laplacian Graph Eigenvectors, \emph{Linear algebra and its applications}, {\bf 278} (1998), 221-–236.

\bibitem{Merris}R. Merris, {\it Graph theory}, Wiley-Interscience Series in Discrete Mathematics and Optimization, Wiley-Interscience, New York, 2001.

Threshold Graphs and Related Topics Edited by N.V.R. Mahadev - Northeastern University, Boston, MA, USA U.N. Peled - University of Illinois at Chicago, Chicago, IL, USA Pages 1-543

\bibitem{Rosa} A. Rosa, On certain valuations of the vertices of a graph, \emph{Theory of Graphs (Internat. Symposium, Rome, July 1966)}, Gordon and Breach, N. Y. and Dunod Paris
(1967) 349-355.

\bibitem{RuchGutman} E. Ruch and I. Gutman, The branching extent of graphs, J. Combin. Inform. System Sci. 4(1979)
285-295.

\bibitem{ScirihaBorgShermanAntiregular}I.~Sciriha, J.~L.~Borg and Z.~Sherman, Distinct Vertex-Edge  Spectral Labelling for Antiregular Graphs, to appear.

\bibitem{Fast}I.~Sciriha, J.~A. Briffa and M.~Debono, Fast algorithms for indices of nested split graphs approximating real complex networks, \emph{Discrete Appl. Math.} {\bf 247} (2018), 152--164.


%
\bibitem{ScirihaFarrugia} Irene Sciriha  and Stephanie Farrugia, On the Spectrum of Threshold Graphs, \emph{ISRN Discrete Mathematics}, 2011. 
DOI: 10.5402/2011/108509


\bibitem{So99}W.~So, Rank one perturbation and its application to the Laplacian spectrum of a graph, \emph{Linear and Multilinear Algebra},vol. 46, no. 3, pp. 193–198, 1999.

\bibitem{StrangLA}G.~Strang, \emph{Introduction to Linear Algebra}, 6th edition, Wellesley-Cambridge Press (2022)

\bibitem{Sumner}D.~P.~Sumner, Dacey graphs, \emph{J. Austral. Math. Soc.} {\bf 18} (1974), 492--502.



\end{thebibliography}
\end{document}